\def\qed{\hfill {\hbox{${\vcenter{\vbox{               
   \hrule height 0.4pt\hbox{\vrule width 0.4pt height 6pt
   \kern5pt\vrule width 0.4pt}\hrule height 0.4pt}}}$}}}
\def\utr{\, \underline{\triangleright}\, }
\def\otr{\, \overline{\triangleright}\, }
\newtheorem{theorem}{Theorem}
\newtheorem{proposition}[theorem]{Proposition}
\newtheorem{corollary}[theorem]{Corollary}
\theoremstyle{definition}
\newtheorem{example}{Example}
\newtheorem{definition}{Definition}
\newtheorem{remark}{Remark}
\date{}
\title{\Large \textbf{Birack Bracket Quivers and Framed Links}}
\author{Sam Nelson\footnote{Email: Sam.Nelson@cmc.edu. Partially supported by Simons Foundation collaboration grant 702597}\and
Haoqi (Tom) Tang\footnote{Email: htot2022@mymail.pomona.edu}}
\begin{document}
\maketitle

\begin{abstract}
We introduce \textit{birack brackets}, skein invariants of birack-colored
framed classical and virtual knots and links with values in a commutative 
unital ring. The multiset of birack bracket values over the homset from a 
framed link's fundamental birack then forms an invariant of framed links. 
We then categorify this multiset to define a quiver-valued invariant of 
framed knots and links. From this quiver we define new polynomial
invariants of framed knots and links.
\end{abstract}

\parbox{5.5in} {\textsc{Keywords:} Biracks, birack brackets, 
quantum enhancements of counting invariants, framed knots and links

\smallskip

\textsc{2020 MSC:} 57K12}

\section{\large\textbf{Introduction}}\label{I}

\textit{Biracks} are algebraic structures whose axioms encode the
framed version of the Reidemeister moves, first introduced in \cite{FRS}
and later studied in works such as \cite{N,NW} etc. Biracks satisfying a 
stronger axiom coming from the unframed Reidemeister I move
known as \textit{biquandles}, notable for their utility in defining
invariants of unframed knots and links, have been studied in many works
such as \cite{EN, FJK, NOR} and more.

In particular, the \textit{birack homset invariant} is the set of
birack homomorphisms from the fundamental birack of an framed oriented 
classical or virtual knot or link $L$ to a (usually finite) birack $X$. 
The elements of this homset invariant can be represented as \textit{birack 
colorings} of a diagram $D$ of $L$, i.e., assignments of elements of $X$
to the semiarcs in a diagram of $L$ satisfying a certain coloring condition
at each crossing. 

Invariants of birack-colored diagrams provide \textit{enhancements} of the 
birack counting invariant, generally stronger invariants of framed oriented 
knots and links which determine the birack counting invariant. In some cases
these can be summed over a finite set of framings of a knot or link to obtain
invariants of unframed oriented knots and links; see \cite{BN, CEGN,NW} for
examples.

\textit{Biquandle brackets} are skein invariants of biquandle-colored 
diagrams with skein coefficients depending on the colors at a crossing,
first introduced in \cite{NOR} and later studied in works such as 
\cite{GNO,HVW,NO} etc. The biquandle bracket invariant was categorified
to the case of \textit{biquandle bracket quivers} in \cite{FN} by
applying a construction from \cite{CN}.

In this paper we generalize the biquandle bracket construction to the case of 
framed oriented classical or virtual knots and links, obtaining \textit{birack 
brackets} and their categorifications, \textit{birack bracket quivers}. The 
paper is organized as follows. In Section \ref{RB} we review the basics of
biracks and the birack homset for framed links. In Section \ref{BB} we 
introduce the birack bracket construction along with a resulting
polynomial invariant of framed links. In Section \ref{BBQ} we categorify
the birack bracket invariant to obtain birack bracket quivers and use 
these to obtain further polynomial invariants of framed knots and links.  
We collect some examples and computations, and we conclude
in Section \ref{Q} with some questions for future research.

This paper, including all text, illustrations, and python code for 
computations, was written strictly by the authors without the use of 
generative AI in any form.

\section{\large\textbf{Birack Review}}\label{RB}

As mentioned in the Introduction, to define knot invariants
various mathematical structures have been implemented to encode the crossing 
information of a knot, taking the forms of groups, tensors, and 
more abstract algebraic structures. Reidemeister moves characterize 
the equivalence relation of ambient isotopy on diagrams; the corresponding
algebraic structures in result in different knot invariants. 
More specifically, \textit{quandles} represent the algebraic structure of
Reidemeister moves on arcs in classical knots; \textit{biquandles}  
represent Reidemeister moves on semi-arcs in classical and virtual knots, and 
\textit{racks} represent the algebraic structure of arcs in framed knots and 
links through replacing Reidemeister I move with the framed version of the 
move. In this paper, we consider the case of \textit{biracks}, algebraic 
structures generated by semiarcs in a link diagram with axioms corresponding to 
blackboard framed isotopy.

Recall that two blackboard framed oriented knot or link diagram represent
isotopic framed knots or links if and only if they are related by a sequence
of the \textit{blackboard framed Reidemeister moves}, of which a generating
set is:
\[\includegraphics{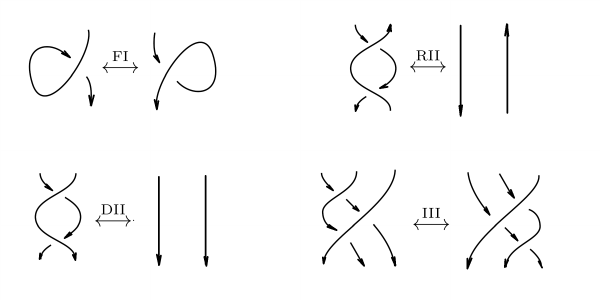}\]

Next we recall a definition from \cite{N}, reformulated in our current notation.

\begin{definition}
A \textit{framed birack} is a set $X$ with two binary operations $\utr,\otr$
satisfying the axioms
\begin{itemize}
\item[(i)] For all $x\in X$ we have
\[(x\utr x)\otr(x\otr x)=(x\otr x)\utr(x\utr x),\]
\item[(ii)] The maps $\alpha_x,\beta_x:X\to X$ and $S:X\times X\to X\times X$
defined by $\alpha_x(y)=y\otr x$, $\beta_y(x)=x\utr y$ and 
$S(x,y)=(y\otr x,x\utr y)$ are invertible for all $x\in X$ and
\item[(iii)] The \textit{exchange laws}
\[\begin{array}{rcl}
(x\utr y)\utr (z\utr y) & = & (x\utr z)\utr (y\otr z) \\ 
(x\utr y)\otr (z\utr y) & = & (x\otr z)\utr (y\otr z) \\ 
(x\otr y)\otr (z\otr y) & = & (x\otr z)\otr (y\utr z) \\ 
\end{array}\]
are satisfied for all $x,y,z\in X$.
\end{itemize}
A birack in which we also have $x\utr x=x\otr x$ for all $x\in X$
is a \textit{biquandle}. A biquandle in which $x\otr y=x$ for all $x,y\in X$
is a \textit{quandle}. 
\end{definition}

\begin{remark}
Biracks are sometimes defined without requiring our axiom (i); we will 
refer to these as \textit{regular biracks}. Essentially, regular biracks
can be used to define invariants of regular isotopy, e.g. braid invariants,
while framed biracks provide invariants of framed knots and links. Unless 
otherwise specified, we will mean ``framed birack'' whenever we say ``birack''.
\end{remark}

As observed in \cite{N}, for any finite birack $X$ there is a unique 
bijection $\pi:X\to X$ called the \textit{kink map} satisfying
\[x\utr \pi(x)=\pi(x)\otr x\]
for all $x\in X$. A biquandle is a birack whose kink map is the identity.

More precisely, the framed birack axioms are chosen so that for any valid 
birack coloring of an oriented framed link diagram on one side of a framed 
Reidemeister move, there is a unique valid coloring of the diagram on the 
other side of the move which agrees outside the neighborhood of the move.
Then starting with a single strand, using Reidemeister II and III moves 
we can perform what's sometimes called the \textit{Fenn-Rourke move} (FR):
\[\includegraphics{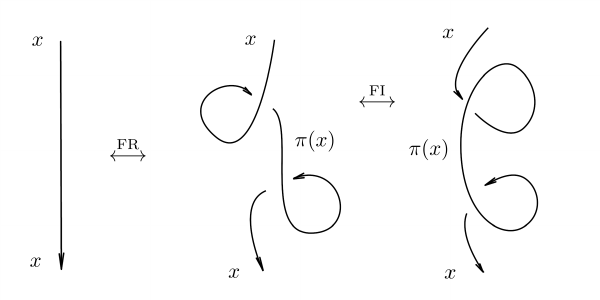}.\]
The bijectivity of colorings establishes the bijectivity of the kink map $\pi$;
the framed Reidemeister I move condition expressed algebraically by axiom (i)
then shows that the kink map (and hence its inverse) does not depend on which
side of the strand the loop is on.

\begin{example}
One infinite family of examples of biracks is the set of 
$(t,s,r)$-\textit{biracks}. Let $X$ be a $\widetilde{\Lambda}$-module 
where $\widetilde{\Lambda} = \mathbb{Z}[t^{\pm 1},s,r^{\pm 1}]/
(s^2-s(r-t))$. Then $X$ is a birack under the operations
\[x \otr y = r x, \quad x \utr y = tx +sy.\]
To see this we can verify the axioms:
 
Checking axiom (i), we have
\begin{eqnarray*}
(x\utr x)\otr(x\otr x)
& = & r(tx+sx) \\
& = & r(t+s)x \\
& = & (tr+st+s(r-t))x \\
& = & (tr+st+s^2)x \\
& = & t(rx)+s(tx+sx) \\
& = & (x\otr x)\utr(x\utr x)
\end{eqnarray*}
as required.

Axiom (ii) says that $S(x,y) = (y\otr x, x\utr y) = (ry, tx +sy)$ is 
invertible for a birack; namely, there exists $S^{-1}$ such that 
$S^{-1} (y \otr x, x\utr y) = (x,y)$, or $S^{-1}(ry, tx+sy) = (x,y)$. 
Then we note that $S^{-1}$ is given by 
$S^{-1}(x, y) = (t^{-1}(y-sr^{-1}x), r^{-1}x)$, where $s$ is not necessarily 
invertible in $\widetilde{\Lambda}$. 
 
Finally, checking the the exchange laws of axiom (iii):
\begin{eqnarray*}
 (x\utr y)\utr (z\utr y) &= &(tx+sy) \utr (tz +sy) \\
 & = &  t^{2}x +sty+ stz +s^{2}y \\
 & = & = t^{2}x +tsz +rsy  \\
 & = &  (x\utr z)\utr (y\otr z) 
\end{eqnarray*}
since $s^{2}+st=rs$,
\begin{eqnarray*}
 (x\utr y)\otr (z\utr y) 
 & = & (tx+sy)\otr (tz+sy) \\
 & = &  r(tx +sy)\\
 & = & rx \utr ry \\
 & = &  (x\otr z)\utr (y\otr z) 
 \end{eqnarray*}
and
\begin{eqnarray*}
 (x\otr y)\otr (z\otr y) & = & rx \otr rz \\
 & = & r^{2} x \\
 & = & rx \otr (ty +sz) \\ 
 & = & (x\otr z)\otr (y\utr z) 
 \end{eqnarray*}

We note that the kink map in an Alexander birack is given by
$\pi(x)=(r-s)t^{-1}x$; this is bijective since $(r-s)$ is invertible in 
$\widetilde{\Lambda}$ with inverse $t^{-1}r^{-1}(s+t)$.
Hence, any $\widetilde{\Lambda}$-module is an example of a birack with
these operations.
\end{example}

\begin{example}
One concrete example of a $(t,s,r)$-birack is $(t=1, s=2)$ on 
$\mathbb{Z}_{4}$. By the condition $s^2=s(r-t)$, 
$r$ is determined to be either 3 or 1; let us choose $r=1$. 
The operation tables for $\utr, \otr$ are then given below:
\[
    \begin{array}{c|cccc}
      \utr & 0 & 1 & 2 & 3\\
      \hline
      0 & 0 & 2 & 0 & 2 \\
      1 & 1 & 3 & 1 & 3 \\
      2 & 2 & 0 & 2 & 0 \\
      3 & 3 & 1 & 2 & 1 \\
\end{array}
\quad
    \begin{array}{c|cccc}
      \otr & 0 & 1 & 2 & 3\\
      \hline
      0 & 0 & 0 & 0 & 0 \\
      1 & 1 & 1 & 1 & 1 \\
      2 & 2 & 2 & 2 & 2\\
      3 & 3 & 3 & 3 & 3 
\end{array}.\]
The kink map in this case is $\pi(x)=3x$.
\end{example}

\begin{definition}   
Let $X$ and $Y$ be biracks with operations $\utr_{X},\otr_{X},\utr_{Y},\otr_{Y}$.
A \textit{birack homomorphism} is a function $f:X\to Y$ such that for 
all $x,y\in X$ we have
\[ f(x\utr_{X}\, y) = f(x)\utr_{Y} f(y)\quad \mathrm{and}\quad
f(x\otr_{X}\, y) = f(x)\otr_{Y} f(y).\]
\end{definition}

\begin{definition} \label{def:coloring}
A \textit{birack coloring} of a framed oriented knot or link diagram is an 
assignment of elements of our birack $X$ to the semiarcs of our diagram such
that at every crossing we have
\[\scalebox{0.7}{\includegraphics{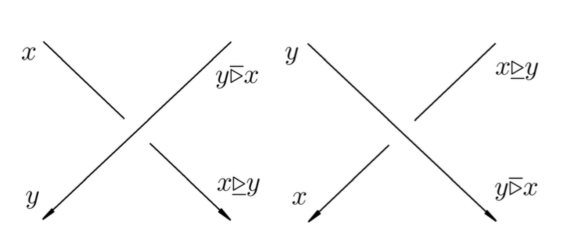}}.\]
\end{definition}


\begin{definition}
Let $L$ be a framed oriented knot or link diagram. The \textit{fundamental
birack} of $L$, denoted $\mathcal{B}(L)$, has a presentation with generators
corresponding to the semiarcs in $L$ and relations between the generators at
crossings as depicted in Definition \ref{def:coloring}. The elements of
$\mathcal{B}(L)$ are then equivalence classes of birack words in these 
generators (including symbols such as $\alpha_x^{-1}(y)$ etc. as required
by the axioms) modulo the equivalence relation generated by the crossing 
relations and the biquandle axioms.
\end{definition} 
       
A birack coloring of a framed knot diagram $L$ by a finite birack $X$ 
determines a birack homomorphism $f:\mathcal{B}(L)\to X$ by assigning
an image to each generator consistently with the birack axioms. That is,
a birack homomorphism $f$ assigns to each semi-arc in $\mathcal{B}(L)$ 
an image element in $X$ so that the crossing information of semi-arcs in 
$\mathcal{B}(L)$ satisfies the algebraic relations of the birack $X$. 
One framed knot birack homomorphism represents one coloring of 
the framed link $L$. The set of all framed knot birack homomorphisms from
the fundamental birack of $L$ to a birack $X$, known as the \textit{birack 
homset}, can then be identified with the set of all possible colorings 
of $K$, denoted by $\mathrm{Hom}\mathcal({B}(L), X)$.

\begin{definition}
The \textit{birack counting invariant} of a framed oriented knot or link 
$L$ is defined as the cardinality of the homset, 
\[\Phi_{X}^{\mathbb{Z}} = |\mathrm{Hom}(\mathcal{B}(L), X)|.\]
\end{definition}

\begin{example}\label{ex:31homset}
The fundamental birack of the framed knot $3_1$ can be computed by labeling 
semi-arcs of the knot as generators and crossing information as relations:
\[\includegraphics{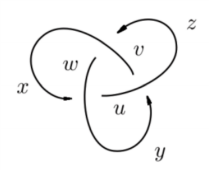}\]
\[
\mathcal{B}(3_1)
=\bigl\langle\,x,y,z,u,v,w \;\bigm|\;
x\utr y = u,\;
y\otr x = w,\;
y \utr z = v,\;
z \otr y = u,\;
z\utr x = w,\;
x \otr z = v
\bigr\rangle.
\]
Then let $X$ be the birack given by the operation tables
\[
\begin{array}{c|c c c}
      \utr & 1 & 2 & 3 \\
      \hline
      1 & 2 & 2 & 2 \\
      2 & 3 & 3 & 3 \\
      3 & 1 & 1 & 1       
    \end{array}
\quad
    \begin{array}{c|c c c}
      \otr & 1 & 2 & 3 \\
      \hline
      1 & 3 & 3 & 3 \\
      2 & 1 & 1 & 1 \\
      3 & 2 & 2 & 2 \\
\end{array}
\]
All possible homomorphisms from the fundamental birack $\mathcal{B}(3_1)$ 
to the birack $X$ form the homset $\mathrm{Hom} (\mathcal{B}(3_1), X)$. 
There are three $X$-colorings:
\[\includegraphics{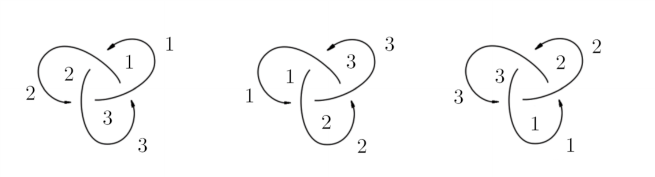}\]
Therefore, the birack counting invariant $|\mathrm{Hom}(\mathcal{B}(3_{1}), X)|$ is 3. 
\end{example}

\begin{remark}
In \cite{N} the birack counting invariant for framed oriented knots and links
was used to define an invariant of unframed oriented knots and links by 
collecting the counting invariant values of a complete period of framings 
modulo the \textit{birack rank}, i.e., the smallest integer $n\ge 1$ such that
$\pi^n=\mathrm{Id}$.
\end{remark}

\section{\large\textbf{Birack Brackets}}\label{BB}

We now state our main definition.


\begin{definition}
Let $X$ be a birack and $R$ a commutative unital ring. A birack bracket is 
defined by two maps $A,B:X\times X\to R^{\times}$ from ordered pairs of elements
of $X$ to units in $R$ denoted 
$A(x,y) = A_{x,y}$ and $B(x,y) = B_{x,y}$ such that
\begin{itemize}
\item[(i)] For all $x\in X$ we have
\[A_{x,x}^2B_{x,x}^{-1}=A_{x\otr x,x\utr x}^2B_{x\otr x,x\utr x}^{-1},\]
\item[(ii)] For all $x,y\in X$,
\[\delta=-A_{x,y}B_{x,y}^{-1}-A_{x,y}^{-1}B_{x,y}\ne 0\]
and 
\item[(iii)] For all $x,y,z\in X$, 
\[\begin{array}{rcl}
A_{x,y}A_{y,z}A_{x\utr y,z\otr y} & = & A_{x,z}A_{y\otr x,z\otr x}A_{x\utr z,y\utr z} \\
A_{x,y}B_{y,z}B_{x\utr y,z\otr y} & = & B_{x,z}B_{y\otr x,z\otr x}A_{x\utr z,y\utr z} \\
B_{x,y}A_{y,z}B_{x\utr y,z\otr y} & = & B_{x,z}A_{y\otr x,z\otr x}B_{x\utr z,y\utr z} \\
A_{x,y}A_{y,z}B_{x\utr y,z\otr y} & = & 
A_{x,z}B_{y\otr x,z\otr x}A_{x\utr z,y\utr z} 
+A_{x,z}A_{y\otr x,z\otr x}B_{x\utr z,y\utr z} \\ 
& & +\delta A_{x,z}B_{y\otr x,z\otr x}B_{x\utr z,y\utr z} 
+B_{x,z}B_{y\otr x,z\otr x}B_{x\utr z,y\utr z} \\
B_{x,y}A_{y,z}A_{x\utr y,z\otr y} 
+A_{x,y}B_{y,z}A_{x\utr y,z\otr y} & & \\
+\delta B_{x,y}B_{y,z}A_{x\utr y,z\otr y} 
+B_{x,y}B_{y,z}B_{x\utr y,z\otr y}  
& = & B_{x,z}A_{y\otr x,z\otr x}A_{x\utr z,y\utr z} \\
\end{array}\]
\end{itemize}
where $A(x,y)$ and $B(x,y)$ are denoted $A_{x,y}$ and $B_{x,y}$ and
$A_{x,x}^2B_{x,x}^{-1}$ is denoted by $w_x$. A birack in which $w_x=w_y$
for all $x,y\in X$ is called \textit{homogeneous}.
\end{definition}

The maps $A,B:X\times X\to R^{\times}$ give the smoothing coefficients 
using the \textit{birack skein relations}: 
\[\includegraphics{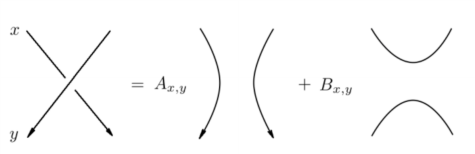}\ \raisebox{0.425in}{and}\
\includegraphics{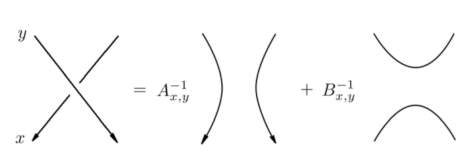}.
\]

The birack bracket axioms are then the conditions required for the 
\textit{state-sum} value to be unchanged by framed Reidemeister moves.
More precisely, simultaneously performing all smoothings results in a
sum of products of skein coefficients times diagrams without crossings, i.e.
disjoint closed curves, known as \textit{Kauffman states}. The value of a
state is $\delta^k$ where $k$ is the number of disjoint curves in the state
times the associated product of skein coefficients. Indeed, we have:

\begin{proposition}
Let $L$ be an oriented link diagram, $X$ a finite birack and $\beta$ a birack
bracket with coefficients in a commutative unital ring $R$. Then for each
coloring $f\in \mathrm{Hom}(\mathcal{BR}(L),X)$, the state-sum value
\[\beta(f)
=\sum_{\mathrm{Kauffman \ states}} 
\left(\prod_{\mathrm{smoothings}}C_{xy}^{\pm 1}\delta^k\right)\]
defined by summing over all Kauffman states the product of the state's 
smoothing coefficients $C_{xy}^{\pm 1}$ times $\delta$ to the power of the 
number of components in the state 
is invariant under $X$-colored framed Reidemeister moves.
\end{proposition}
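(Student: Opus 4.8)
The plan is to verify, one move at a time, that $\beta(f)$ is unchanged under each generator of the blackboard framed Reidemeister moves. The essential reduction is the locality fact recorded just after Definition~\ref{def:coloring}: any birack coloring of the part of a diagram lying outside the disk where a move is performed extends uniquely to a coloring of the modified diagram. So the comparison is purely local, and it suffices to show that applying the birack skein relations inside that disk yields the same formal $R$-linear combination of colored crossingless tangles on the two sides of the move, once we declare a crossingless closed component worth a factor of $\delta$ and otherwise allow only colored planar isotopy rel boundary. (The state sum is a finite sum because $X$ is finite and $L$ has finitely many classical crossings; and the virtual and mixed Reidemeister moves involve no classical crossing, so they change neither the set of colorings nor the associated state sums and can be set aside, leaving only the classical moves.)

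For framed Reidemeister I, I would resolve the single self-crossing of a colored curl: one smoothing returns the strand, with its color pushed across the curl by the kink map $\pi$, and the other returns that strand together with a disjoint circle worth $\delta$. Substituting $\delta=-A_{x,y}B_{x,y}^{-1}-A_{x,y}^{-1}B_{x,y}$ from axiom (ii) shows the curl contributes a unit scalar depending only on the incident color and the curl type, and axiom (i) is precisely the statement that this scalar is unchanged under the color relabelling relating the two types of curl appearing in the generating framed RI move, so the scalars attached on the two sides agree. For Reidemeister II, resolving both crossings gives four terms, three of which (after colored planar isotopy) reproduce the trivial two-strand tangle and one of which reproduces it with an extra disjoint circle; collecting coefficients and plugging in the value of $\delta$, which holds for every color pair by axiom (ii), collapses the total to $1$, matching the crossingless side, and the various signed and oriented versions go the same way, using that $A_{x,y},B_{x,y}\in R^{\times}$ to invert coefficients where the roles of the two smoothings are exchanged. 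For Reidemeister III, I would resolve all three crossings on each side, obtaining $2^3=8$ terms per side; the exchange laws of the birack guarantee that corresponding semiarcs on the two sides carry the same colors, so the eight colored crossingless tangles match up in pairs, and after using the already-established RII invariance to simplify the configurations in which two crossings of the same pair of strands occur, equating the coefficients of the surviving tangles reproduces precisely the five identities of axiom (iii).

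The hard part will be the Reidemeister III step: carefully enumerating the eight Kauffman states on each side, propagating the birack colors along every semiarc — this is where the exchange laws and the invertibility of $\alpha_x,\beta_x,S$ are needed, to certify that the two expansions really index over the same family of colored crossingless tangles — and then arranging the bookkeeping so that a term-by-term comparison yields exactly the displayed equations of axiom (iii) rather than some regrouped equivalent set. A secondary nuisance is handling all the oriented and signed variants of RII and RIII uniformly; I expect to reduce each to the computation above by first normalizing the local picture with RII moves and planar isotopy. The framed RI verification is comparatively short but delicate in its own right, since it hinges on getting right how $\pi$ relabels a strand through each type of curl, which is exactly the input axiom (i) is designed to control.
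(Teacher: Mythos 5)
Your proposal is correct and follows essentially the same route as the paper: the paper disposes of the type II and all-positive type III moves by citing the biquandle bracket argument of \cite{NOR}, and verifies only the framed type I move explicitly, where --- exactly as you argue --- each side of the move expands to $A_{x,x}\delta+B_{x,x}$ resp.\ $A_{x\otr x,x\utr x}\delta+B_{x\otr x,x\utr x}$, and substituting the value of $\delta$ shows the curl contributes the unit $-A_{x,x}^2B_{x,x}^{-1}=-w_x$, so equality of the two sides is precisely axiom (i). One minor bookkeeping slip in your RII sketch: it is a single state that reproduces the identity two-strand tangle (with coefficient $1$), while the other three states yield the turnback tangle (one of them with an extra disjoint circle) and it is \emph{their} coefficients that cancel via $\delta=-A_{x,y}B_{x,y}^{-1}-A_{x,y}^{-1}B_{x,y}$, not the reverse; this does not affect the validity of your overall approach.
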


\begin{proof}
Invariance under the direct and reverse type II moves and the all-positive
type III move is the same as in \cite{NOR}; let us consider the framed type
I move. 
\[\includegraphics{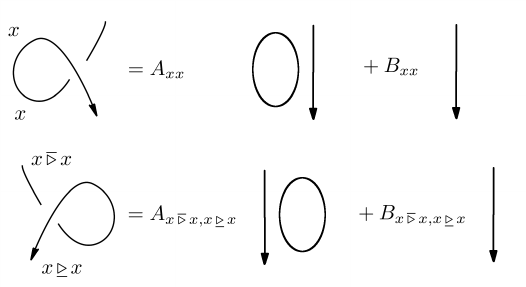}\]
Comparing the two sides of the move we obtain the requirement that
\[A_{xx}\delta+B_{xx}=A_{x\otr x,x\utr x}\delta+B_{x\otr x,x\utr x}\]
which says
\[A_{xx}(-A_{xx}B_{xx}^{-1}-A_{xx}^{-1}B_{xx})+B_{xx}
=A_{x\otr x,x\utr x}(-A_{x\otr x,x\utr x}B_{x\otr x,x\utr x}^{-1}
-A_{x\otr x,x\utr x}^{-1}B_{x\otr x,x\utr x})+B_{x\otr x,x\utr x}\]
which reduces to 
\[A_{xx}^2B_{xx}^{-1}=A_{x\otr x,x\utr x}^2B_{x\otr x, x\utr x}^{-1}.\]
\end{proof}

We can specify a birack bracket for a finite biquandle $X$
with values in $R$ with a block matrix $[A|B]$ whose entries
are the smoothing coefficients $A_{i,j}, B_{i,j}$. 

\begin{example}
The birack $X$ specified by the operation tables
\[
\begin{array}{r|rrr}
\utr & 1 & 2 & 3 \\\hline
1 & 1 & 3 & 1 \\
2 & 2 & 2 & 2 \\
3 & 3 & 1 & 3 \\
\end{array}
\quad
\begin{array}{r|rrr}
\otr & 1 & 2 & 3 \\\hline
1 & 3 & 3 & 3 \\
2 & 2 & 2 & 2 \\
3 & 1 & 1 & 1 \\
\end{array}
\]
has birack brackets over $\mathbb{Z}_5$ including
\[
\left[\begin{array}{rrr|rrr}
1 & 1 & 3 & 4 & 4 & 2 \\
3 & 2 & 4 & 2 & 3 & 1 \\
1 & 2 & 3 & 4 & 3 & 2 
\end{array}\right]
\]
and over $\mathbb{C}$ including
\[
\left[\begin{array}{rrr|rrr}
1 & -i & -i & -1 & i & i \\
-i & 1 & -1 & i & -1 & 1 \\
1 & 1 & -i & -1 & -1 & i 
\end{array}\right].
\]
\end{example}

\begin{corollary}
Let $L$ be an oriented link diagram, $X$ a finite birack and $\beta$ a birack
bracket with coefficients in a commutative unital ring $R$. Then the multiset 
of $\beta$ values over the birack homset, denoted $\Phi_X^{\beta,M}(L)$,
is an invariant of oriented framed knots and links.
\end{corollary}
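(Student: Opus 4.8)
The plan is to reduce the statement to the single-move invariance already established in the Proposition. Recall that two diagrams $D$ and $D'$ represent the same framed oriented (classical or virtual) link if and only if they are connected by a finite sequence of blackboard framed Reidemeister moves, planar isotopies, and, in the virtual case, virtual detour moves. Since $\Phi_X^{\beta,M}(L)$ is built from a diagram $D$ of $L$ as the multiset $\{\beta(f)\mid f\in\mathrm{Hom}(\mathcal{B}(D),X)\}$, it suffices to show that this multiset is unchanged when $D$ is replaced by a diagram $D'$ obtained from $D$ by a single such move.

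The first key step is to produce a bijection on homsets. As noted after Definition~\ref{def:coloring}, the birack axioms are precisely the conditions guaranteeing that every $X$-coloring of the part of $D$ lying outside the neighborhood of a framed Reidemeister move extends to a unique $X$-coloring of $D'$, and conversely. Equivalently, the presentations of $\mathcal{B}(D)$ and $\mathcal{B}(D')$ differ only by Tietze transformations dictated by the move, so there is a canonical isomorphism $\mathcal{B}(D)\cong\mathcal{B}(D')$, and hence a bijection $\Psi\colon\mathrm{Hom}(\mathcal{B}(D),X)\to\mathrm{Hom}(\mathcal{B}(D'),X)$ under which $f$ and $f'=\Psi(f)$ induce the same coloring on the portion of the diagram common to $D$ and $D'$.

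The second key step is to check that $\Psi$ preserves bracket values, i.e.\ $\beta(f)=\beta(f')$ for all $f$. A Kauffman state of a diagram is a choice of smoothing at each classical crossing; the crossings of $D$ and $D'$ outside the neighborhood of the move are identified. Expanding $\beta(f)$ and $\beta(f')$ and grouping the Kauffman states according to their smoothings at the unchanged crossings, one factors out of each group the common product of skein coefficients and the common $\delta$-power coming from the unchanged curves, reducing the desired equality in each group to exactly the local identity verified in the proof of the Proposition: the $w_x$-equation (birack bracket axiom (i)) for the framed type~I move, and the identities handled as in \cite{NOR} (encoded by axioms (ii) and (iii)) for the direct and reverse type~II moves and the all-positive type~III move. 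Virtual crossings receive no smoothing, so virtual detour moves change neither the set of Kauffman states nor their weights, and the colorings correspond trivially; thus $\beta(f)=\beta(\Psi(f))$ in all cases.

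Finally, since $\Psi$ is a value-preserving bijection between $\mathrm{Hom}(\mathcal{B}(D),X)$ and $\mathrm{Hom}(\mathcal{B}(D'),X)$, the multisets $\{\beta(f)\mid f\}$ and $\{\beta(f')\mid f'\}$ coincide; hence $\Phi_X^{\beta,M}(L)$ does not depend on the chosen diagram and is an invariant of framed oriented knots and links. The step I expect to demand the most care is the bookkeeping in the second step: although a Reidemeister move alters the global list of Kauffman states of the diagram, it does so in a sufficiently controlled way that the global state-sum identity factors through the purely local identities of the Proposition. This is routine Kauffman-bracket manipulation, but it must be stated cleanly, in particular keeping careful track of the blackboard-framing contributions in the type~I move.
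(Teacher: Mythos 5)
Your proof is correct and takes essentially the same route as the paper: the paper treats the corollary as an immediate consequence of the Proposition, since invariance of $\beta(f)$ under $X$-colored framed Reidemeister moves together with the standard move-induced bijection of colorings (guaranteed by the birack axioms) directly gives equality of the multisets. Your write-up simply makes that bijection and the value-preservation bookkeeping explicit, which is the intended argument.
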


\begin{definition}
We can convert the birack bracket multiset invariant into a polynomial for
ease of comparison. We define the \textit{birack bracket polynomial} 
associated to a birack $X$ and birack bracket $\beta$ as 
\[\Phi_X^{\beta}(L)=\sum_{x\in \Phi_X^{\beta,M}(L)} u^x.\]
\end{definition}

\begin{example}\label{E4} Consider the birack over the set 
$X=\{1, 2, 3\}$ with operation tables given by 
\[
\begin{array}{r|rrr}
\utr & 1 & 2 & 3 \\ \hline
1 & 2 & 2 & 2  \\
2 & 3 & 3 & 3  \\
3 & 1 & 1 & 1  \\
\end{array}
\quad
\begin{array}{r|rrr}
\otr & 1 & 2 & 3 \\ \hline
1 & 3 & 3 & 3    \\
2 & 1 & 1 & 1     \\
3 & 2 & 2 & 2      
\end{array}.
\]

One birack bracket with coefficients in $\mathbb {Z}_{5}$ 
as computed with our \texttt{python} code is given by
\[
\left[
\begin{array}{*{3}{c}|*{3}{c}}
1 & 2 & 1   & 1 & 2 & 1      \\
3 & 1 & 3   & 3 & 1 & 3        \\
3 & 1 & 3   & 3 & 1 & 3      
\end{array}
\right]
\]
with cycle value 
\[
    \delta= -A_{x,y} B_{x,y}^{-1}--A_{x,y}^{-1} B_{x,y}
    = -(1)(1)-(1)(1) = -2 = 3
\]
which is the same for all values of $x$ and $y$. 

Applying the birack bracket skein relation to obtain the
state sum for the framed trefoil knot with writhe 3, we have:
\[\includegraphics{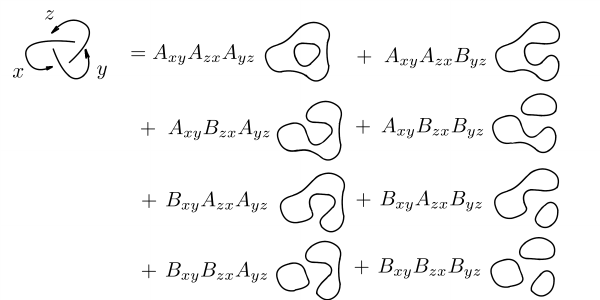}\]
yielding
\begin{eqnarray*}
\beta(f) & = & A_{xy}A_{zx}A_{yz}\delta^2+  
A_{xy}A_{zx}B_{yz}\delta+ 
A_{xy}B_{zx}A_{yz}\delta+  
A_{xy}B_{zx}B_{yz}\delta^2 \\
& &
+B_{xy}A_{zx}A_{yz}\delta+  
B_{xy}A_{zx}B_{yz}\delta^2+ 
B_{xy}B_{zx}A_{yz}\delta^2+ 
B_{xy}B_{zx}B_{yz}\delta^3
\end{eqnarray*}

Then substituting in the values of $A_{xy}, B_{xy}$ etc. for each of the
three colorings, we have
\[
\begin{array}{ccc|c}   
      x & y & z & \phi \\
      \hline
      2 & 3 & 1 & (3)(2)(3)(3^2)\times 4 + (3)(2)(3)(3) \times3 +(3)(2)(3) (3^2) = 3 + 2 +1 = 1 \\
       3 & 1 & 2 &  (2)(3)(3)(3^2)\times 4 + (2)(3)(3)(3) \times3 +(2)(3) (3) (3^2) = 3 + 2 +1 = 1\\
       1 & 2 & 3 & (3)(3)(2)(3^2)\times 4 + (3)(3)(2)(3) \times3 +(3) (3)(2) (3^2) = 3 + 2 +1 = 1   
    \end{array}
\]

The resulting birack bracket multiset of the trefoil knot is 
$\Phi_{X}^{\beta,M} (3_1)$ is $\{1,1,1\}$, which in polynomial form is $3u$.
\end{example}

\begin{remark}
The framed trefoil with writhe 4 has no colorings by the birack in Example 
\ref{E4} and hence $\Phi_{X}^{\beta,M}(L) =\emptyset$ which in polynomial form is 
$0$. In particular the birack bracket invariant can distinguish different
framings of the same framed knot.
\end{remark}

\begin{example} \label{ex:l1}
Let $X$ be the birack given by the operation tables
\[
\begin{array}{r|rrr}
\utr & 1 &2 & 3 \\ \hline
1 & 2 & 2 & 1 \\
2 & 1 & 1 & 2 \\
3 & 3 & 3 & 3 \\
\end{array}
\quad
\begin{array}{r|rrr}
\utr & 1 &2 & 3 \\ \hline
1 & 1 & 1 & 2 \\
2 & 2 & 2 & 1 \\
3 & 3 & 3 & 3 \\
\end{array}.
\]
We compute via \texttt{python} that the coefficient matrix
\[
\left[\begin{array}{rrr|rrr}
1 & 1 & -1 & -1 & -1 & 1 \\
-1 & -1 & -1 & 1 & 1 & 1 \\
i & -i & 1 & -i & i & 1
\end{array}\right]
\]
defines a birack bracket on $X$ with coefficients in $\mathbb{C}$.
Then the framed Hopf link $L$ 
\[\includegraphics{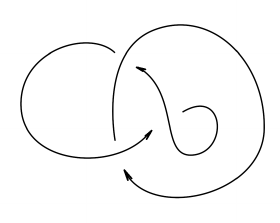}\]
has birack bracket polynomial
\[\Phi_X^{\beta}(L)=4u^{4i}+5u^4.\]
\end{example}

\section{\large\textbf{Birack Bracket Quivers}}\label{BBQ}

We begin this Section with a definition generalizing a definition in \cite{FN}.

\begin{definition}
Let $X$ be a finite birack, $S\subseteq \mathrm{Hom}(X,X)$ a set of birack 
endomorphisms and $L$ an oriented framed link. Then the \textit{birack 
coloring quiver} of $L$ with respect to $X$ and $S$, denoted 
$\mathcal{BRQ}_{X,S}(L)$, is the directed graph with a vertex for each 
birack coloring $v\in\mathrm{Hom}(\mathcal{BR}(L),X)$ 
and a directed edge from vertex $v_1$ to vertex $v_2$ whenever $v_2=v_1\sigma$
for each endomorphism $\sigma\in S$.
\end{definition}

This definition is motivated by the observation that if $\sigma:X\to X$ is
a birack endomorphism and $v_1$ is a valid birack coloring of a diagram of $L$,
then applying $\sigma$ to each color in $v_1$ yields another valid coloring,
$v_2=v_1\sigma$.
\[\includegraphics{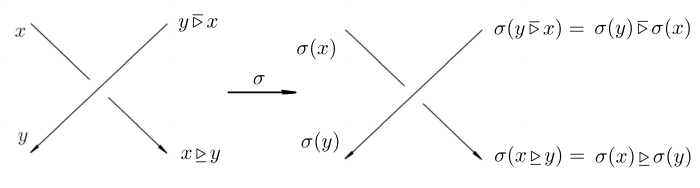}\]

As in previous cases such as \cite{CCN,CN,FN} etc., we observe that since the
ingredients for $\mathcal{BRQ}_{X,S}(L)$ are all invariant under framed 
Reidemeister moves, so is $\mathcal{BRQ}_{X,s}(L)$. That is, we have:

\begin{theorem}
For any finite birack $X$ and $S\subseteq\mathrm{Hom}(X,X)$, the quiver
$\mathcal{Q}_{X,S}(L)$ is an invariant of framed isotopy.
\end{theorem}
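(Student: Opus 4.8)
The plan is to show that each of the three ingredients in the construction of $\mathcal{Q}_{X,S}(L)$ — the vertex set, the edge relation, and the diagram-independence of both — depends only on the framed isotopy class of $L$, not on the chosen diagram $D$. First I would invoke the fact, recorded in Section \ref{RB}, that the birack counting invariant is well defined: the homset $\mathrm{Hom}(\mathcal{BR}(L),X)$ is an invariant of framed links because the fundamental birack $\mathcal{BR}(L)$ is presented by generators and relations read off a diagram, and the birack axioms are precisely engineered so that a coloring of a diagram before a framed Reidemeister move extends uniquely to a coloring of the diagram after the move. This gives a canonical bijection between the colorings of any two diagrams of $L$ related by a single framed Reidemeister move, hence (by composition) between the colorings of any two diagrams of $L$.

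Next I would track the edges under this bijection. Fix diagrams $D$ and $D'$ of $L$ related by a framed Reidemeister move, let $\Psi\colon \mathrm{Hom}(\mathcal{BR}(D),X)\to\mathrm{Hom}(\mathcal{BR}(D'),X)$ be the bijection above, and let $\sigma\in S$. The key point is that $\Psi$ commutes with post-composition by $\sigma$: if $v_2 = v_1\sigma$ as colorings of $D$, then $\Psi(v_2) = \Psi(v_1)\sigma$ as colorings of $D'$. This holds because $\Psi$ is induced by an isomorphism $\mathcal{BR}(D)\cong\mathcal{BR}(D')$ of fundamental biracks (both are presentations of the same group-like object $\mathcal{BR}(L)$), and applying the endomorphism $\sigma$ to colors is by definition post-composition $f\mapsto \sigma\circ f$ of birack homomorphisms, which is natural with respect to birack isomorphisms on the source. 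Concretely, in the neighborhood of the move the unique extension of a coloring is determined by the birack operations, and $\sigma$ being a homomorphism means the extension of $v_1\sigma$ is the $\sigma$-image of the extension of $v_1$; outside the neighborhood nothing changes. Therefore $\Psi$ carries the directed edge set of $\mathcal{Q}_{X,\{\sigma\}}(D)$ bijectively onto that of $\mathcal{Q}_{X,\{\sigma\}}(D')$, and taking the union over $\sigma\in S$ gives an isomorphism of directed graphs $\mathcal{Q}_{X,S}(D)\cong\mathcal{Q}_{X,S}(D')$.

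Finally I would conclude by the standard induction: since every two diagrams of a framed link $L$ are connected by a finite sequence of framed Reidemeister moves, composing the graph isomorphisms above yields $\mathcal{Q}_{X,S}(D)\cong\mathcal{Q}_{X,S}(D')$ for any two diagrams, so the isomorphism type of the quiver is an invariant of framed isotopy. I would note that this argument is formally identical to the ones in \cite{CN,FN,CCN} for coloring quivers, with the only change being the use of the framed Reidemeister moves and the birack (rather than biquandle) homset. The main obstacle — really the only content beyond citing prior work — is verifying the naturality statement that the coloring bijection $\Psi$ intertwines the $S$-action, i.e. that the unique-extension process respects application of a birack endomorphism; once that compatibility is in hand the rest is bookkeeping.
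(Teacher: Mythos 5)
Your argument is correct and follows the same route the paper takes: the paper simply observes (citing \cite{CCN,CN,FN}) that all ingredients of the quiver are invariant under framed Reidemeister moves, which is exactly the content you spell out. Your additional verification that the coloring bijection induced by a framed move intertwines post-composition by each $\sigma\in S$ is the detail the paper leaves implicit, so your write-up is a fleshed-out version of the same proof rather than a different approach.
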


Next, following earlier work such as \cite{FN}, we come to our primary new 
definition.

\begin{definition}\label{def:main}
Let $X$ be a finite birack, $S\subseteq\mathrm{Hom}(X,X)$ a set of birack 
endomorphisms, $\beta$ a birack bracket and $L$ an oriented framed link. The
\textit{birack bracket quiver} of $L$ with respect to $X,S$ and $\beta$, 
denoted $\mathcal{BRQ}_{X,\beta}^{S}(L)$, is the birack coloring quiver of
$L$ with respect to $X$ and $S$ with vertices $v$ weighted with the birack 
bracket values $\beta(v)$ associated to the colorings $v$. 
\end{definition}

A finite quiver is a small category with vertices as objects and directed paths
as morphisms; hence, Definition \ref{def:main} establishes a family of 
categorifications of the birack bracket multiset and polynomial 
invariants from the previous section.

The isomorphism class of $\mathcal{BRQ}_{X,S}^{\beta}$ is an invariant of framed
oriented knot and links and can thus be used to distinguish framed oriented
knots and links. Since these quivers become complex quite quickly, it is
also useful to extract some simpler invariants via decategorification.

\begin{definition}
Given a finite birack $X$, a framed knot $K$ and a birack bracket $\beta$
with coefficients in a commutative unital ring $R$ and 
$S\subset \mathrm{Hom}(X,X)$ a set of endomorphisms of $X$.
Then the birack bracket quiver $\mathcal{BRQ}_{X,S}^{\beta} (K)$ has 
vertex set $V$ and edge set $E$. Let $P$ the set of maximal non-repeating
paths (i.e., ordered sequences of edges with each edge terminating at the 
initial vertex of the next edge such that no edge is included more than 
once and such that the path is not a sub-path of any other path)
in the quiver. We then decategorify this birack bracket 
quiver $\mathcal{BRQ}_{X,S}^{\beta} (K )$ to obtain the 
\textit{birack bracket in-degree polynomial} 
    \[
    \Phi_{X, \beta}^{S, \mathrm{deg}_+} (K) = \sum_{v \in V} u^{\beta (v)} v^{deg+(v)},
    \]
the \textit{birack bracket two-variable polynomial} 
    \[
    \Phi_{X, \beta}^{S, 2} (K) = \sum_{e \in E} s^{\beta (s(e))} t^{\beta (t (e)) }
    \]
 where $s(e)$ and $t(e)$ represent the source and tail of edge $e \in E$, and
the \textit{birack bracket maximal path polynomial}
    \[
    \Phi_{X, \beta}^{S, MP} (K) = \sum_{p\in P} x^{\sum_{v\in p}\beta (v)} y^{|p|}
    \]
where $v\in p$ means $v$ is a vertex in the path $p$ and $|p|$ is the 
length of the path $p$.
\end{definition}

\begin{example}
Recall the three different colorings of the trefoil knot $3_1$ by the constant
action birack structure on $X = \{ 1, 2 ,3 \} $ in \ref{ex:31homset}.
Let the endomorphism $S =\{\phi\}\subset \mathrm{Hom}(X,X)$ where 
$\phi(1) = 2, \phi(2) =3$ and $\phi(3) =1$. We can specify this endomoprhism
with a list of output values, i.e. $\sigma=[2,3,1]$.
Then we have birack coloring quiver 
\[\includegraphics{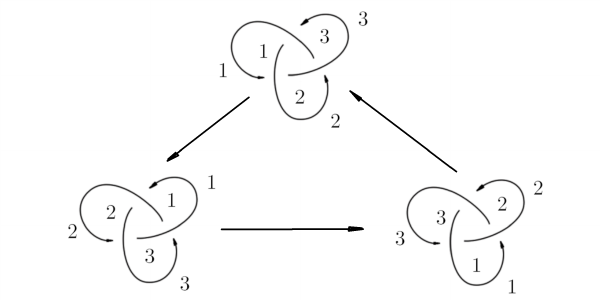}.\]

The birack coloring quiver then replaces each coloring with a vertex weighted 
by its birack bracket value; in this case we have
\[\includegraphics{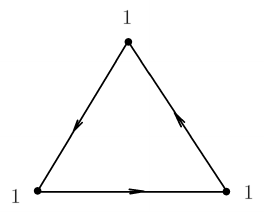}.\]
The framed knot invariant polynomials obtained from decategorification of 
this birack bracket quiver are then
    \[
    \Phi_{X, \beta}^{S, deg+} (3_{1}) = 3uv
    \]
    and 
    \[
    \Phi_{X, \beta}^{S, 2} (3_{1}) = 3 st.
    \]
\end{example}

\begin{example}
The birack in Example \ref{ex:l1} has endomorphism set 
$\mathrm{Hom}(X,X)=\{\sigma_1,\sigma_2,\sigma_3\}=\{[1,2,3],[3,3,3],[2,1,3]\}$,
i.e., where $\sigma_1$ is the identity map, $\sigma_2$ is the constant map 
sending everything to 3, and $\sigma_3$ fixes 3 while swapping 1 and 2. 
Then the birack bracket quiver of the framed Hopf link
in Example \ref{ex:l1} with respect to the birack bracket in the same example
and endomorphism set $S=\{\sigma_2,\sigma_3\}$ is
\[\includegraphics{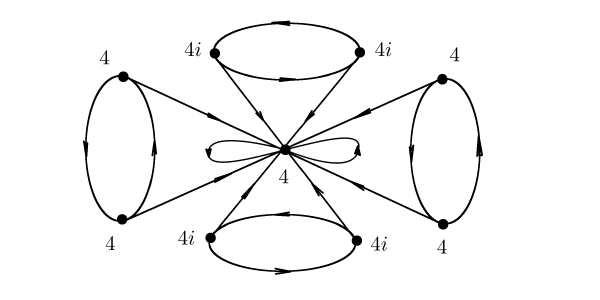}.\]


\end{example}

\begin{example}
\label{E6} 
Let $X$ be the birack defined by the operation tables
\[
    \begin{array}{c|c c c c}
      \utr & 1 & 2 & 3 & 4\\
      \hline
      1 & 2 & 4 & 2 & 4 \\
      2 & 1 & 3 & 1 & 3 \\
      3 & 4 & 2 & 4 & 2 \\
      4 & 3 & 1 & 3 & 1
    \end{array}
\quad
    \begin{array}{c|c c c c}
      \otr & 1 & 2 & 3 & 4 \\
      \hline
      1 & 4 & 2 & 4 & 2 \\
      2 & 3 & 1 & 3 & 1 \\
      3 & 2 & 4 & 2 & 4 \\  
      4 & 1 & 3 & 1 & 3 \\  
    \end{array}
\]
Then one birack bracket found with our \texttt{python} code over 
$\mathbb{Z}_5$ is given by
    \[
        \left[
        \begin{array}{cccc|cccc}
        1 & 1 & 2 & 1   & 2 & 2 & 4 & 2     \\
        2 & 2 & 2 & 1   & 4 & 4 & 4 & 2     \\
        1 & 1 & 1 & 3   & 2 & 2 & 2 & 1     \\
        2 & 2 & 4 & 2   & 4 & 4 & 3 & 4 
        \end{array}
        \right].
        \]

The set of endomorphisms of this birack $X$ is  
\[\mathrm{Hom}(X,X)=\{[4,2,3,1],[4,3,2,1],[1,2,3,4],[1,3,2,4]\}.\]

With these, the birack bracket quiver invariants for various blackboard
framed links from the table at \cite{KA} are:

\[
    \begin{array}{r|l}   
       \Phi_{X, \beta}^{S, 2} (L) & L \\
      \hline
       16u^4 v^4 & L2a1, L6a2, L6a3, L7a5, L7a6 \\
       16u^4 v^4 +32 u^3 v^4 +16uv^4 & L4a1 \\
       32u^4v^4  &  L5a1, L7a1, L7a3, L7a4, L7n2 \\
       32u^4v^{4} + 32u^3v^{4}  & L6a1\\
       32u^4v^4+32u^3v^4+96u^2v^4+96uv^4  & L6a4 \\
       32u^4v^4+32u^3v^4+96u^2v^4   & L6a5, L6n1\\
       16u^4v^4+16u^3v^4 & L7a2\\
        32u^3v^4 + 32u v^4 & L7a7 \\
       16u^4v^4 + 16 u^2v^4   & L7n1\\
    \end{array}.
\]

With the same birack and birack bracket, the birack bracket quiver 
invariants for various blackboard framed virtual knots from the table 
at \cite{KA} are: 

\[    \begin{array}{r|l}   
       \Phi_{X, \beta}^{S, 2} (VK) & VK \\
      \hline
       8u^4v^4+8u^2v^4 & 2.1, 4.21, 4.24, 4.28, 4.36  \\
       8u^2v^4 & 3.1, 3.3, 3.3, 3.4, 3.5, 3.6, 3.7 \\
       8u^3v^4 +8u^2v^4 & 4.1, 4.3, 4.9, 4.15, 4.25, 4.28, 4.29, 4.37\\
       16u^2v^4  & 4.2, 4.6, 4.8, 4.12, 4.13, 4.14\\
       8u^2v^4 +8uv^4 &  4.4, 4.5, 4.10, 4.11, 4.16, 4.18, 4.23, 4.27, 4.30, 4.31, 4.33, 4.38, 4.39\\
       16u^2v^4  &  4.2, 4.17, 4.19, 4.20, 4.22, 4.26, 4.32, 4.34, 4.35, 4.40
\end{array}\]
\end{example}

\begin{example}\label{E10}
Continuing with the birack and birack bracket from Example \ref{E6},
the following example shows our new decategorification invariants 
distinguishing different framings of the same knot, in this case the Figure 8
knot $4_1$ with framings from $-4$ to $4$. We illustrate the
cases of framing numbers $-1$, $0$ and $1$:
\[
        \includegraphics{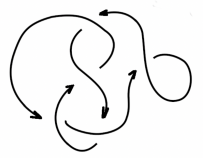}
        \includegraphics{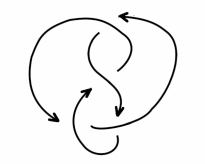}
        \includegraphics{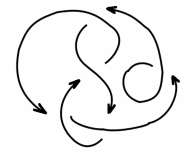}
\]

Then we compute invariant values in the table.
\[    
\begin{array}{r|l}   
       \Phi_{X, \beta}^{S, 2} (4_1) & 4_1 \mathrm{\ with\ framing\ number\ 
from\ -4 \ to\ 4}\\
      \hline
       16u^2v^4 &  4_1^0 \\
       8u^2v^4 &  4_1^1, 4_1^3, 4_1^{-1}, 4_1^{-3} \\
       8u^4v^4+8u^2v^4 &  4_1^2 \\
       8u^3v^4 + 8u^2v^4 &  4_1^4, 4_1^{-4} \\
       8u^2v^4 + 8uv^4&  4_1^{-2}
\end{array}\] 
where the superscript denotes framing number from $-4$ to $4$.
\end{example}

We conclude this section with another example.

\begin{example}
Let $X$ be the birack given by the operation tables
\[
\begin{array}{r|rrr}
\utr & 1 & 2 & 3 \\ \hline
1 & 2 & 2 & 2 \\
2 & 1 & 1 & 1 \\
3 & 3 & 3 & 3
\end{array}
\quad
\begin{array}{r|rrr}
\otr & 1 & 2 & 3 \\ \hline
1 & 1 & 1 & 2 \\
2 & 2 & 2 & 1 \\
3 & 3 & 3 & 3
\end{array}.
\]
Then we compute via \textit{python} that the block matrix
\[
\beta=\left[
\begin{array}{rrr|rrr}
1 & 1 & i & -q & -q & iq \\
i & i & -1 & -iq & -iq & q\\
i & 1 & 1 & -iq & -q & -q 
\end{array}\right]
\]
defines a birack bracket over $\mathbb{C}[q^{\pm 1}]$ and the maps
\[\sigma_1=[3,3,3],\quad \sigma_2=[2,1,3]\]
are endomorphisms of $X$. Then we compute that the framed virtual links
\[\includegraphics{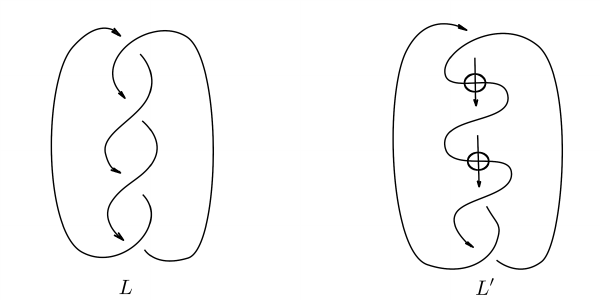}\]
are distinguished by the birack maximal path polynomial with
\[\Phi_{X,S}^{\beta,MP}(L_1)=8x^{6q^8+6q^2+6+6q^{-2}}y^5+8y^5\]
and
\[\Phi_{X,S}^{\beta,MP}(L_2)=4x^{(3-3i)q^8+(3-3i)q^2+3-3i+(3-3i)q^{-2}}y^5
+4x^{(3+3i)q^8+(3+3i)q^2+3+3i+(3+3i)q^{-2}}y^5.
\]

We note that these two virtual links have the same Jones polynomial
value of $q^8+q^4+1q^{-12}$, so this example shows that the birack 
bracket quiver (and hence its decategorifications) are not determined by 
the Jones polynomial.
\end{example}

\section{\large\textbf{Questions}}\label{Q}

It is important to note that the examples in this paper are, generally speaking,
``toy'' examples, using small-cardinality biracks and typically finite 
rings since these are easy to compute; they are not to be taken to represent 
the full power of this infinite family of invariants. In particular, larger
biracks and larger finite or infinite coefficient rings should provide more
powerful invariants.

In biquandle bracket theory, as in the Kauffman bracket polynomial,
we can adjust the value of the bracket of any framing of a knot or link
back to the value of the zero framing by multiplying by an appropriate power
of $w$. In birack bracket theory this no longer works for the simple reason 
that different framings can have different numbers of colorings, so adjusting
a given coloring to a given writhe may not make sense. This is reflected in
the fact that 
the value of $w_x$ is not always invariant over the entire birack bracket. 
What do these values of $w_x$ imply about the algebraic structure of 
generic birack brackets? What global information about chosen biracks can 
a combination of $w_x$ values over corresponding birack brackets describe?

We are curious about the possibility of functors between framed 
knots/links and unframed knots/links.
Over chosen finite rings on which the birack brackets are defined, one can 
add different framing numbers as shown in Example \ref{E10}. 
The reverse process is 
to ``smooth out'' the framing information as a state sum of different framed 
knot/link states. The corresponding factors might be defined with respect to 
the $w$ values. We hope to resolve these issues in a future paper.

What information about a framed knot or link is contained in the geometric 
structure of the birack coloring quiver? We can observe that paths in the quiver
always terminate in a cycle since the out-degree of every vertex is always 
positive; similarly, endomorphisms which are automorphisms tend to produce
cycles in the quiver. What other characterizations can be made? 

Finally, we ask what is the algebraic structure of the set of birack brackets?
What operations can be used to combine brackets over a given birack to obtain
valid birack brackets? 

\bigskip

\bibliographystyle{abbrv}
\bibliography{sn-tt}

\noindent
\textsc{Department of Mathematical Sciences \\
Claremont McKenna College \\
850 Columbia Ave. \\
Claremont, CA 91711} 

\medskip

\noindent
\textsc{Department of Physics and Astronomy\\
Pomona College \\
333 N. College Way \\
Claremont, CA 91711}

\end{document}